\pdfoutput=1
\documentclass[12pt]{scrartcl}

\usepackage[scaled=.90]{helvet}
\usepackage{mathptmx}
\usepackage{courier}
\usepackage[round]{natbib}
\usepackage[ngerman,english]{babel}
\usepackage{dsfont}
\usepackage{amsmath}
\usepackage{amssymb}

\usepackage{enumerate, amssymb, amsmath, amsthm}
\newcommand{\N}{\ensuremath{{\mathbb N}}}

\usepackage{tikz} 
\usepackage[T1]{fontenc}
\usepackage{lmodern}
\usepackage[figurewithin=section]{caption}
\usepackage{units}
\usepackage{hyperref}

\newtheorem{satz}{Theorem}[section]

\newtheorem{lem}[satz]{Lemma}

\newcommand{\R}{\ensuremath{{\mathbb R}}}

\newcommand{\Y}{\ensuremath{{\mathcal Y}}}


\begin{document}

\title{Convergence of switching diffusions}

\author{S\"oren Christensen\thanks{Christian-Albrechts-Universit\"at, Mathematisches Seminar, Ludewig-Meyn-Str. 4, 24098 Kiel, Germany, email: \textit{lastname}@math.uni-kiel.de.}\;\;\thanks{University of Hamburg, Department of Mathematics, SPST, Bundesstra\ss e 55, 20146 Hamburg, Germany.}\;, Albrecht Irle\footnotemark[1]}
\date{\today}
\maketitle
\begin{center}
\end{center}

\begin{abstract}
This paper studies the asymptotic behavior of processes with switching. More precisely, the stability under fast switching for diffusion processes and discrete state space Markovian processes is considered. The proofs are based on semimartingale techniques, so that no Markovian assumption for the modulating process is needed. 
\end{abstract}

\textbf{Keywords:} processes with switching, switching diffusions, fast switching, asymptotic stability, semimartingales \vspace{.8cm}

\textbf{Subject Classifications:} 93E03, 60H20, 60H30\\

\section{Introduction}
Stochastic processes with dynamics depending on a further source of randomness have been of interest as well for theoretical reasons as from the point of view of application. Such processes are called processes with switching and usually switching involves an additional Markovian source of randomness with a finite number of states. For diffusion processes $(X_t)_{t\in[0,\infty)}$ given by a stochastic differential equation, the dynamics then additionally depend on a  modulating Markovian process $(Y_t)_{t\in[0,\infty)}$ with finite state space. \cite{Mao} gives an extensive treatment of this subject. If $(X_t)_{t\in[0,\infty)}$ is itself a Markovian process with a discrete state space then the intensity matrix will depend on the modulating process.

In this paper we are interested in the stability under fast switching. This  concerns the behavior of the switching process when the modulating process depends on an additional parameter $\epsilon > 0$ which lets it fluctuate more and more rapidly when $\epsilon$ tends to 0, so we have processes
$(X_t^\epsilon)_{t\in[0,\infty)}$ and $(Y_t^\epsilon)_{t\in[0,\infty)}$.
The question of interest concerns the asymptotic stability, i.e. convergence in a distributional sense, of
$(X_t^\epsilon)_{t\in[0,\infty)}$ as  $\epsilon$ tends to 0 which is by no means obvious as the processes $(Y_t^\epsilon)_{t\in[0,\infty)}$ fluctuate more and more rapidly.

In fast Markovian switching we look at processes
$(Y_t^\epsilon)_{t\in[0,\infty)}$  with intensity matrix $\frac{1}{\epsilon} G$ for a given intensity matrix $G$; in pathwise terms we would look at
$(Y_{t/\epsilon})_{t\in[0,\infty)}$. In \cite{Skorohod} and \cite{Skorohod1} the asymptotic stability in the stochastic differential equation setting was shown, and in \cite{Kabanov,Kabanov1} this stability was derived for conditionally Poisson processes. For the proofs the assumption of Markovian switching was essential and the technical details can be seen as complicated
and technically involved. Note that the processes
$(X_t^\epsilon)_{t\in[0,\infty)}$ themselves are not Markovian so that the usual machinery for showing distributional convergence of Markov processes
cannot be applied directly and has to be adapted.

This note stems from the observation that the processes $(X_t^\epsilon)_{t\in[0,\infty)}$
are semimartingales, so that we may show stability using the convergence theorem for families of semimartingales. As demonstrated here, this can indeed be done. Section \ref{sec:diffusion} is devoted to the case of diffusion processes which is technically more involved relying on some uniform estimates for switching diffusions; Section \ref{sec:discrete} treats discrete state space Markovian processes where the proofs are simpler. The Appendix \ref{appendixa} contains the proof of the analytical Lemma \ref{lem:analytic} which is essential for obtaining the main results.\\
An advantage of the semimartingale approach is that the Markovian assumption for the modulating process is no longer needed, only an  assumption of ergodicity. Furthermore, the proofs turn out to be less complicated than using an approach based Markov theory. 
As switching processes have various applications in financial market modeling, see e.g
\cite{IKLM} or \cite{CIK}, where the modulating process may correspond to macroeconomic influences, this generalization might be of interest in this field. In particular, the findings discussed in \cite{CIK} are based on the results presented in this paper.

\section{Diffusion processes}\label{sec:diffusion}
We consider c\`adl\`ag processes $(X_t)_{t\in[0,\infty)},(Y_t)_{t\in[0,\infty)}$, where $X_t:\Omega\rightarrow I$ for some interval $I\subseteq \R$, $Y_t:\Omega\rightarrow \Y$ for some suitable space $\Y$. Assume that for
\[b:I\times \Y\rightarrow\R,\;\;\sigma:I\times \Y\rightarrow \R\]
 the process $(X_t)_{t\in[0,\infty)}$ fulfills the stochastic differential equation
\[dX_t=b(X_t,Y_t)dt+\sigma(X_t,Y_t)dW_t,\]
where $(W_t)_{t\in[0,\infty)}$ is a Wiener process independent of $(Y_t)_{t\in[0,\infty)}$. Note that the dynamics of the process $X$ depend on the modulating process $Y$, so that we may also write $X=X^Y$.

\begin{lem}\label{lem:moment_estimate}
Assume that there exist $C_1,C_2$ such that
\[\max\{|b(x,y)|,|\sigma(x,y)|\}\leq C_1+C_2|x|\mbox{ for all }x\in I,y\in \Y.\]
Let $q\geq 1, \;\;E|X_0|^q < \infty$. Then for all $T>0$ there exists some constant\ $C_3$, only depending on $q,C_1,C_2,T$ and  $E|X_0|^q$, such that
\[\sup_{t\in[0,T]}E|X_t|^q\leq C_3.\]

\end{lem}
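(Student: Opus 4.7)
The plan is to use a Lyapunov-function argument based on It\^o's formula combined with Gronwall's inequality. Since $|x|^q$ fails to be $C^2$ when $1\le q<2$, I would work instead with the smooth surrogate $V(x)=(1+x^2)^{q/2}$, which satisfies $|x|^q\le V(x)$ and $V(x)\le C(1+|x|^q)$, so control of $E V(X_t)$ immediately yields the desired moment bound and the integrability $E V(X_0)<\infty$ follows from the hypothesis on $X_0$.

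First I would compute
\[V'(x)=qx(1+x^2)^{q/2-1},\qquad V''(x)=q(1+x^2)^{q/2-1}+q(q-2)x^2(1+x^2)^{q/2-2},\]
and introduce the localizing stopping times $\tau_n=\inf\{t\ge 0:|X_t|\ge n\}$, so that up to $t\wedge\tau_n$ the process stays bounded and all stochastic integrals that appear are genuine martingales. Applying It\^o's formula to $V(X_{t\wedge\tau_n})$ and taking expectations removes the martingale part, leaving
\[E V(X_{t\wedge\tau_n})=E V(X_0)+E\!\int_0^{t\wedge\tau_n}\!\Bigl(V'(X_s)b(X_s,Y_s)+\tfrac{1}{2}V''(X_s)\sigma^2(X_s,Y_s)\Bigr)\,ds.\]

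The computational core is to bound the integrand by a constant multiple of $V(X_s)$. Using the uniform linear growth $|b|,|\sigma|\le C_1+C_2|x|$, one checks that $|V'(x)b(x,y)|\le C(1+x^2)^{q/2-1}\cdot(1+|x|)(1+|x|)\le C V(x)$ and analogously $|V''(x)|\sigma^2(x,y)\le C V(x)$, with constants depending only on $q,C_1,C_2$; the key point is that the factors $(1+x^2)^{q/2-1}$ absorb exactly the quadratic growth coming from $b\sigma$ and $\sigma^2$. This gives
\[E V(X_{t\wedge\tau_n})\le E V(X_0)+C\int_0^t E V(X_{s\wedge\tau_n})\,ds,\]
and Gronwall's inequality yields $E V(X_{t\wedge\tau_n})\le E V(X_0)\,e^{CT}$ for all $t\in[0,T]$ and all $n$. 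Sending $n\to\infty$ and applying Fatou's lemma gives $\sup_{t\le T}E|X_t|^q\le \sup_{t\le T}E V(X_t)\le C_3$ with $C_3$ depending only on $q,C_1,C_2,T$ and $E|X_0|^q$, as required.

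The main obstacle is the non-smoothness of $|x|^q$ for $q<2$, which is handled once and for all by the choice of the regularized Lyapunov function $V$; once this is done the argument is standard and, crucially, makes no use of the nature of $Y$ beyond measurability, since the growth bound on $b,\sigma$ is uniform in $y\in\mathcal Y$. A minor care point is to ensure the stochastic integral up to $t\wedge\tau_n$ is a true martingale; this is automatic because on $[0,\tau_n]$ the integrand $V'(X_s)\sigma(X_s,Y_s)$ is bounded.
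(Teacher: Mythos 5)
Your proof is correct. The paper itself does not prove this lemma; it simply cites Lemma 2 and Corollary 6 of Section 2.5 in Krylov's book, so your write-up supplies a self-contained argument where the paper outsources one. The route you take --- It\^o's formula applied to the regularized Lyapunov function $V(x)=(1+x^2)^{q/2}$, localization by $\tau_n=\inf\{t:|X_t|\ge n\}$, the pointwise bound $|V'b|+\tfrac12|V''|\sigma^2\le CV$ coming from the linear growth hypothesis, Gronwall, and then Fatou as $n\to\infty$ --- is the standard one underlying Krylov's estimate, and all the steps check out: the surrogate $V$ handles the non-smoothness of $|x|^q$ for $1\le q<2$, the stopped integrand is bounded so the martingale term genuinely vanishes in expectation, $EV(X_0)<\infty$ follows from $E|X_0|^q<\infty$, and the resulting constant depends only on $q,C_1,C_2,T,E|X_0|^q$. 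You also correctly flag the point that actually matters for the rest of the paper, namely that the bound uses nothing about $Y$ beyond measurability, so it is uniform over all $\Y$-valued modulating processes --- this is precisely the uniformity the authors exploit in Lemmas 2.2 and 2.3. What the citation buys the authors is brevity; what your argument buys is transparency about where the uniformity in $Y$ comes from. The only cosmetic remark is that your intermediate bound ``$|V'(x)b(x,y)|\le C(1+x^2)^{q/2-1}(1+|x|)(1+|x|)$'' silently uses $(1+|x|)^2\le 2(1+x^2)$, which is of course harmless.
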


\begin{proof}
This follows immediately from \cite[Lemma 2 and Corollary 6 in Section 2.5]{Krylov}.
\end{proof}

It is important to note that this estimate holds uniformly in all processes $(Y_t)_{t\in[0,\infty)}$ taking values in $\Y$, and  this will also be explicitly stated in the following result:
\begin{lem}\label{lem:unif}
Under the assumptions of Lemma 2.1 let $E|X_0| < \infty$. Then for any $\delta>0$ there exists some $K>0$ such that 
\[P\left(\sup_{t\leq T}|X_t|\geq K\right)\leq \delta\]
uniformly in all $\Y$-valued modulating processes $(Y_t)_{t\in[0,\infty)}$ for $X=X^Y$.
\end{lem}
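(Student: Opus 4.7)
The plan is to bound $P(\sup_{t\leq T}|X_t|\geq K)$ via the semimartingale decomposition of $X$, then apply Markov and Doob type inequalities to each piece; uniformity in $Y$ is automatic because the constant in Lemma~\ref{lem:moment_estimate} depends only on $q,C_1,C_2,T$ and $E|X_0|^q$, not on the particular modulating process.

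Concretely, write
\[
X_t=X_0+\int_0^t b(X_s,Y_s)\,ds+M_t,\qquad M_t:=\int_0^t\sigma(X_s,Y_s)\,dW_s,
\]
and combine the triangle inequality with the linear growth hypothesis to obtain
\[
\sup_{t\leq T}|X_t|\leq|X_0|+\int_0^T(C_1+C_2|X_s|)\,ds+\sup_{t\leq T}|M_t|.
\]
A union bound at level $K/3$ reduces the claim to three tail estimates: $P(|X_0|\geq K/3)\leq 3E|X_0|/K$ is immediate, the drift piece is handled by Markov and Fubini together with Lemma~\ref{lem:moment_estimate} at $q=1$, giving a bound of order $1/K$, and the martingale piece by Doob's $L^2$ maximal inequality and It\^o's isometry,
\[
P\!\Bigl(\sup_{t\leq T}|M_t|\geq K/3\Bigr)\leq\frac{9}{K^2}\int_0^T E\sigma(X_s,Y_s)^2\,ds,
\]
which in turn is controlled by $E\sigma^2\leq 2C_1^2+2C_2^2E|X_s|^2$ and Lemma~\ref{lem:moment_estimate} at $q=2$. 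Choosing $K$ large enough then makes each summand below $\delta/3$, uniformly in $Y$.

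The main technical point is that the martingale step relies on Lemma~\ref{lem:moment_estimate} at $q=2$, which requires $E|X_0|^2<\infty$, whereas the hypothesis of Lemma~\ref{lem:unif} is only $E|X_0|<\infty$. The cleanest fix is a truncation of the initial condition: conditioning on $X_0$ reduces the problem to a deterministic starting point, for which a Gronwall argument in the SDE and the scheme above give
\[
P\bigl(\sup_{t\leq T}|X_t|\geq K\mid X_0\bigr)\leq c(1+X_0^2)/K^2
\]
uniformly in $Y$; splitting at $\{|X_0|\leq M\}$ versus $\{|X_0|>M\}$ and integrating yields $cM^2/K^2+E|X_0|/M$, which tends to zero after optimizing (for instance $M=K^{2/3}$). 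This conditioning/truncation bookkeeping, rather than the main Doob--Markov argument, is the only real obstacle.
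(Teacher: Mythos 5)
Your proof is correct and follows the same overall architecture as the paper's: a union bound at level $K/3$ over the initial value, the drift integral and the stochastic integral, with Markov's inequality plus Lemma~\ref{lem:moment_estimate} at $q=1$ for the first two pieces. The only methodological difference is in the martingale term: you apply Doob's $L^2$ maximal inequality together with the It\^o isometry, whereas the paper represents $\int_0^t\sigma(X_s,Y_s)\,dW_s$ as a time-changed Brownian motion $W'_{\beta(t)}$ with $\beta(t)=\int_0^t\sigma^2(X_s,Y_s)\,ds$ and splits on the event $\{\beta(T)\geq\gamma\}$, bounding $P(\beta(T)\geq\gamma)$ by Markov. Both routes reduce to controlling $\int_0^T E\sigma^2(X_s,Y_s)\,ds\leq\int_0^T E(C_1+C_2|X_s|)^2\,ds$, i.e.\ to Lemma~\ref{lem:moment_estimate} with $q=2$, and both inherit uniformity in $Y$ from the $Y$-uniformity of the constant $C_3$; neither is more elementary than the other.

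The moment issue you flag is a genuine observation, and it applies to the paper's own proof as well: the stated hypothesis is only $E|X_0|<\infty$, yet the estimate $E\beta(T)\leq\int_0^T E(C_1+C_2|X_s|)^2\,ds\leq C_4$ invokes Lemma~\ref{lem:moment_estimate} at $q=2$, which requires $E|X_0|^2<\infty$ (and $C_4$ then also depends on that second moment). Your conditioning-and-truncation repair is sound: for a deterministic start $x$ the constant of Lemma~\ref{lem:moment_estimate} takes the form $c(1+x^2)$ with $c=c(C_1,C_2,T)$ independent of $Y$, so the conditional bound $c(1+X_0^2)/K^2$ on $\{|X_0|\leq M\}$ together with the tail $E|X_0|/M$ and the choice $M=K^{2/3}$ closes the argument under the first-moment hypothesis alone; the only point to make explicit is that the conditional law of $X$ given $X_0=x$ is that of the solution started at $x$, which holds since $X_0$ is $\mathcal{F}_0$-measurable and independent of $(W,Y)$. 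In this respect your proof is slightly stronger than the printed one, which as written implicitly uses $E|X_0|^2<\infty$.
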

\begin{proof}
Use $3K$ instead of $K$. Then we see that
\begin{align*}
&P\left(\sup_{t\leq T}\big|X_0+\int_0^t b(X_s,Y_s)ds+\int_0^t\sigma(X_s,Y_s)dW_s\big|\geq 3K\right)\\
\leq&P(|X_0|\geq K)+P\left(\sup_{t\leq T}\big|\int_0^t b(X_s,Y_s)ds\big|\geq K\right)+P\left(\sup_{t\leq T}\big|\int_0^t\sigma(X_s,Y_s)dW_s\big|\geq K\right)
\end{align*}
The first term is trivial, so we start by looking at the second term. Clearly
\begin{align*}
\sup_{t\leq T}\left |\int_0^t b(X_s,Y_s)ds\right|\leq \int_0^T|b(X_s,Y_s)|ds
\leq \int_0^T\left(C_1+C_s|X_s|\right)ds=C_1T+\int_0^TC_2|X_s|ds.
\end{align*}
Now, using Lemma \ref{lem:moment_estimate},
\[E\int_0^TC_2|X_s|ds=\int_0^TC_2E|X_s|ds\leq C_2C_3T,\]
so that the expectation of the second term is bounded by $(C_1+C_2C_3)T$, and the second term is bounded by Markov's inequality by
\[P\left(\sup_{t\leq T}|\int_0^t b(X_s,Y_s)ds|\geq K\right)\leq \frac{(C_1+C_2C_3)T}{K}.\]
Now we consider the third term: The stochastic process $\left(\int_0^t\sigma(X_s,Y_s)dW_s\right)_{t\geq 0}$ is, for another Wiener process $(W'_t)_{t\geq 0}$, equal to $(W'_{\beta(t)})_{t\geq 0}$, where $\beta(t)=\int_0^t\sigma^2(X_s,Y_s)ds$. Hence, for any $\gamma>0$
\begin{align*}
P\left(\sup_{t\leq T}|\int_0^t\sigma(X_s,Y_s)dW_s|\geq K\right)&=P\left(\sup_{t\leq T}|W'_{\beta(t)}|\geq K\right)
=P\left(\sup_{t\leq \beta(T)}|W'_{t}|\geq K\right)\\
&\leq  P\left(\beta(T)\geq \gamma\right)+P\left(\sup_{t\leq \gamma}|W'_{t}|\geq K\right).
\end{align*}
Note that
\[E\beta(T)\leq E\int_0^T(C_1+C_2|X_s|)^2ds=\int_0^TE(C_1+C_2|X_s|)^2ds\leq C_4\]
by Lemma \ref{lem:moment_estimate} for some $C_4$ only depending on $C_1,C_2$ and $T$. So firstly choose $\gamma$ with $C_4/\gamma=\delta/4$, hence
\[P\left(\beta(T)\geq \gamma\right)\leq\frac{\delta}{4}.\]
Now choose $K$ with
\[P\left(\sup_{t\leq \gamma}|W'_{t}|\geq K\right) \leq\frac{\delta}{4},\; \;P(|X_0|\geq K)\leq\frac{\delta}{4},\;\;\frac{(C_1+C_2C_3)T}{K}\leq\frac{\delta}{4}.\]
Altogether, we obtain
\[P\left(\sup_{t\leq T}|X_t|\geq 3K\right)\leq \delta.\]
\end{proof}


For fixed $\rho , T>0$ define
$\tau_0=0$ and
\[
\tau_i=\inf\{s\geq \tau_{i-1}:|X_s-X_{\tau_{i-1}}|=\rho\}\wedge T, \;\;
n_T=\sup\{k:\tau_k<T\}.
\]
It is rather obvious that for a fixed process $Y$ it holds that $P(\tau_1=0)=0$ and $P(n_T<\infty)=1$. In the following Lemma, we show a version of this observation, that holds uniformly in all modulating processes $(Y_t)_{t\in[0,\infty)}$ taking values in $\Y$.

\begin{lem}\label{lem:N_t estimate}
In addition to the assumptions of Lemma \ref{lem:moment_estimate}, let us assume that
\begin{equation}\label{eq:bounded_coeff}
\sup_{|x| \leq K, y \in \Y} (|b(x,y)| +  \sigma^2(x,y)) < \infty\mbox{ for all $K>0$.}
\end{equation}
Then for any  $\rho , T, \delta > 0$ there exist $K', \rho' >0$, such that
\[P(\tau_1\geq \rho')\geq 1-\delta,\;\; P(n_T\leq K')\geq 1-\delta\]
for all processes $(Y_t)_{t\in[0,\infty)}$ taking values in $\Y$.
\end{lem}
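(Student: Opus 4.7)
The strategy is to reduce to a bounded-coefficient situation via Lemma \ref{lem:unif} and then apply a maximal inequality for the first claim and It\^o's formula for the second. Using Lemma \ref{lem:unif}, fix $K>0$ (depending on $\delta$ and $T$) such that $P(\sup_{t\leq T}|X_t|\geq K)\leq\delta/2$ uniformly in all modulating $Y$; assumption \eqref{eq:bounded_coeff} then provides constants
\[
M_1:=\sup_{|x|\leq K,\,y\in\Y}|b(x,y)|<\infty,\qquad M_2:=\sup_{|x|\leq K,\,y\in\Y}\sigma^2(x,y)<\infty.
\]
Define $\tau_K=\inf\{t:|X_t|\geq K\}$ and the stopped process $X^K_t:=X_{t\wedge\tau_K}$; on $A_K:=\{\sup_{t\leq T}|X_t|<K\}$ we have $\tau_K>T$, so $X^K\equiv X$ on $[0,T]$, while the coefficients driving $X^K$ are globally bounded by $M_1$ and $\sqrt{M_2}$.

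For the first statement, write
\[
X^K_t-X_0=\int_0^{t\wedge\tau_K}b(X_s,Y_s)\,ds+\int_0^{t\wedge\tau_K}\sigma(X_s,Y_s)\,dW_s,
\]
whose drift contribution to $\sup_{s\leq\rho'}|X^K_s-X_0|$ is bounded by $M_1\rho'$ and hence less than $\rho/2$ for $\rho'$ sufficiently small. Doob's $L^2$-inequality applied to the stochastic integral yields
\[
P\!\left(\sup_{s\leq\rho'}\Big|\int_0^{s\wedge\tau_K}\sigma(X_u,Y_u)\,dW_u\Big|\geq\tfrac{\rho}{2}\right)\leq\frac{16\,M_2\,\rho'}{\rho^2},
\]
which is $\leq\delta/2$ once $\rho'$ is shrunk further. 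On $A_K$ one has $X=X^K$ on $[0,T]$, so $\{\tau_1<\rho'\}\cap A_K$ is contained in the event above; adding $P(A_K^c)\leq\delta/2$ gives $P(\tau_1<\rho')\leq\delta$ uniformly in $Y$.

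For the second statement, define $\tilde\tau_i$ and $\tilde n_T$ exactly as $\tau_i, n_T$ but with $X^K$ in place of $X$; they agree with $\tau_i, n_T$ on $A_K$. Applying It\^o to $(X^K_t-X^K_{\tilde\tau_{i-1}})^2$ on each interval $[\tilde\tau_{i-1},\tilde\tau_i]$, using $(X^K_{\tilde\tau_i}-X^K_{\tilde\tau_{i-1}})^2=\rho^2$ for $i\leq\tilde n_T$, and summing, one obtains
\[
\tilde n_T\rho^2=2\int_0^{\tilde\tau_{\tilde n_T}}\Phi_s\,dX^K_s+\int_0^{\tilde\tau_{\tilde n_T}}\sigma^2(X_s,Y_s)\,\mathds{1}_{s\leq\tau_K}\,ds,
\]
where $\Phi_s:=X^K_s-X^K_{\tilde\tau_{i-1}}$ on $[\tilde\tau_{i-1},\tilde\tau_i)$ satisfies $|\Phi_s|\leq\rho$. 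Splitting $dX^K_s$ into drift and Brownian parts, the drift part contributes at most $\rho M_1 T$ in absolute value, the $ds$-term is dominated by $M_2 T$, and the $dW$-term is a genuine $L^2$-martingale (its integrand is bounded by $\rho\sqrt{M_2}$) which vanishes under expectation when sampled at the bounded stopping time $\tilde\tau_{\tilde n_T}\leq T$. Dividing by $\rho^2$ yields $E[\tilde n_T]\leq 2M_1T/\rho+M_2T/\rho^2=:B$, a bound independent of $Y$, and Markov's inequality together with $P(A_K^c)\leq\delta/2$ delivers the claim upon choosing $K'\geq 2B/\delta$.

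The main technical obstacle lies in the second part: making rigorous the identity for $\tilde n_T\rho^2$ and, in particular, justifying that the martingale term has zero expectation despite the random upper limit. This is precisely where the truncation at $\tau_K$ is indispensable, since only together with \eqref{eq:bounded_coeff} does it render the relevant integrand bounded, so that optional sampling applies to a true $L^2$-martingale rather than a merely local one.
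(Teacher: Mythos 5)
Your proof is correct in substance but takes a genuinely different route from the paper's, most notably for the bound on $n_T$. For $\tau_1$, the paper does not localize at all: it bounds the drift term in probability via Markov's inequality and the moment estimate of Lemma \ref{lem:moment_estimate}, and handles the stochastic integral through the time change $\int_0^t\sigma\,dW=W'_{\beta(t)}$ together with $E\beta(\rho')\le C_2'\rho'$; your version localizes at $\tau_K$ first and then applies Doob's $L^2$-inequality, which is equally valid and somewhat more elementary. For $n_T$, the paper argues combinatorially: on $\{\sup_{t\le T}|X_t|\le K\}$ each interval $[\tau_{i-1},\tau_i)$ either has length at least $\rho/(2C)$ (with $C=\sup_{|x|\le K,y\in\Y}|b(x,y)|$) or supports an oscillation of size $\rho/2$ of the time-changed Brownian motion $W'$ inside $[0,DT]$, and the number of disjoint intervals carrying such an oscillation is a.s.\ finite by path continuity; this yields tightness of $n_T$ uniformly in $Y$ but no explicit bound. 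Your It\^o/occupation argument instead produces the quantitative estimate $E[\tilde n_T]\le 2M_1T/\rho+M_2T/\rho^2$, uniform in $Y$, which is stronger and arguably cleaner.

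One step needs repair. The random time $\tilde\tau_{\tilde n_T}$ is \emph{not} a stopping time (deciding that no further $\rho$-crossing occurs before $T$ requires knowledge of the future), so optional sampling cannot be invoked there directly; moreover, writing the identity $\tilde n_T\rho^2=\cdots$ presupposes $\tilde n_T<\infty$, which is part of what is to be proved. Both issues disappear if you work with a fixed $k$: the time $\tilde\tau_k$ \emph{is} a bounded stopping time, summing It\^o's formula over $i=1,\dots,k$ gives $(k\wedge\tilde n_T)\rho^2\le 2\int_0^{\tilde\tau_k}\Phi_s\,dX^K_s+M_2T$, taking expectations annihilates the (true $L^2$-) martingale part and bounds the drift part by $2\rho M_1T$, and letting $k\to\infty$ by monotone convergence yields $E[\tilde n_T]\le 2M_1T/\rho+M_2T/\rho^2$. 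With this modification your argument is complete and correct.
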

\begin{proof} $(a)$ We start by looking at $\tau_1$. As in the proof of Lemma \ref{lem:unif} and using the same notation we have for $\rho' < T$
\begin{align*}
& P( \tau_1 \leq \rho') = P\left(\sup_{t\leq \rho'}|X_t - X_0| \geq \rho \right)\\
\leq&  \frac{(C_1 + C_2C_3)\rho'}{\rho/2} +      P(\beta(\rho') \geq \gamma)+P\left(\sup_{t\leq \gamma }| W'_t| \geq \frac{\rho}{2} \right)\\
\leq&  C'_1\frac{\rho'}{\rho} +      \frac{E\beta(\rho')}{\gamma}+P\left(\sup_{t\leq \gamma }| W'_t| \geq \frac{\rho}{2}\right)\\
\leq&  C'_1\frac{\rho'}{\rho} +     C'_2 \frac{\rho'}{\gamma}+P\left(\sup_{t\leq \gamma }| W'_t| \geq \frac{\rho}{2}\right).
\end{align*}
Firstly we choose $\gamma$ such that $P\left(\sup_{t\leq \gamma }| W'_t| \geq \rho/2\right) \leq \delta/3$.
Then we choose $\rho'$ such that $  C'_1\frac{\rho'}{\rho}\leq \delta/3,\;C'_2 \frac{\rho'}{\gamma}\leq \delta/3$,
which gives the first estimate of the assertion.

$(b)$ Let $\delta >0$. Choose $K$ according to Lemma \ref{lem:unif} such that
\[P\left(\sup_{t\leq T}|X_t|\geq K\right)\leq \frac{\delta}{2}.\]
We set \[ C = \sup_{|x| \leq K, y \in \Y} |b(x,y)|,\; D = \sup_{|x| \leq K, y \in \Y} \sigma^2(x,y).\]
The following estimates are always considered on $A= \{\sup_{t\leq T}|X_t| \leq K\}.$
Note that for any $\tau,s \geq 0$
\[ \left|\int_\tau^{\tau + s} b(X_s,Y_s)ds+\int_\tau^{\tau +s} \sigma(X_s,Y_s)dW_s\right| \geq \rho\]
implies
\[ \int_\tau^{\tau + s} |b(X_s,Y_s)| ds \geq \frac{\rho}{2} \mbox{ or } \left|\int_\tau^{\tau +s} \sigma(X_s,Y_s)dW_s\right| \geq \frac{\rho}{2}.\]
Hence on $A$
\[ \int_\tau^{\tau + s} C ds \geq \frac{\rho}{2} \mbox{ or } \left|\int_\tau^{\tau +s} \sigma(X_s,Y_s)dW_s\right| \geq \frac{\rho}{2} \]
so that
\[ \tau_i - \tau_{i-1} \geq \min\left\{\frac{\rho}{2C}, \;\inf\{s:
|\int_{\tau_{i-1}}^{\tau_{i-1} +s} \sigma(X_s,Y_s)dW_s| \geq \frac{\rho}{2}\}\right\}.\]
Now we may argue in the following way. If $\tau_k < T$ then there exist $k$ disjoint stochastic intervals
 $[ \tau_{i-1}, \tau_{i})= J_i \subseteq [0,T)$ such that
 \begin{center}
 the length of $J_i$ is $\geq$
$\frac{\rho}{2C}$ or $\sup_{s \in J_i}|\int_{\tau_{i-1}}^s \sigma(X_s,Y_s)dW_s| \geq \rho/2$.
\end{center}
The number $m$ of intervals $J_i$ with length $\geq \frac{\rho}{2C}\;$ must fulfil
$m\frac{\rho}{2C} < T$  so that there are at least
$$k-\frac{T2C}{\rho} \mbox{ intervals }J_i  \mbox{ with }
\sup_{s \in J_i}\left|\int_{\tau_{i-1}}^s \sigma(X_s,Y_s)dW_s\right| \geq \frac{\rho}{2}.$$
To obtain a bound independent of the particular process $(Y_t)_t$ we transfer this to the process $(W'_t)_t$. Looking at the intervals
$[\beta( \tau_{i-1}), \beta(\tau_{i}))= J'_i$ these are disjoint intervals $\subseteq
[0, \beta(T))$ and for at least $k-\frac{T2C}{\rho}$ of them we have
$$ \sup_{t \in J'_i} | W'_t - W'_{\beta(\tau_{i-1})}| \geq \frac{\rho}{2}.$$
Note that on $A$
$$ \beta (T) = \int_0^T\sigma^2(X_t,Y_t) dt \leq DT.$$
So it follows that $\tau_k < T$ implies the existence of at least $k-\frac{T2C}{\rho}$  such disjoint intervals $J'_i \subseteq [0,DT]$.

For a formal statement define the random variable
$$ Z_{\rho',T'} = \sup \{k: \mbox{ There exist $k$ disjoint intervals } \subseteq [0,T') \mbox{ with }
\sup_{a_i \leq t \leq b_i}|W'_t-W'_{a_i}|\geq \rho'\}.$$
By path continuity we see that
$$ P(Z_{\rho',T'} = \infty)=0, \mbox{ hence } P(Z_{\rho',T'} \geq k)\to 0
\mbox{ as } k \to \infty. $$
The foregoing reasoning implies
$$ P(n_T \geq k) = P(\tau_k < T) \leq P(A^c) + P\left(Z_{\rho/2,DT}\geq k-\frac{T2C}{\rho}\right).$$
So we only have to choose $K'$ such that
$P(Z_{\rho/2,DT}\geq K'-\frac{T2C}{\rho}) \leq \delta/2$ to obtain the second estimate. Note that $K'$ is independent of the particular process $(Y_t)_{t\in[0,\infty)}$.
\end{proof}

Before coming to the main result of this section, we provide an analytical lemma which is essential for the following.
\begin{lem}\label{lem:analytic}
Let $f:\R\rightarrow\R$ be measurable such that
\[\frac{1}{T}\int_0^Tf(x)dx\rightarrow0\mbox{ as }T\rightarrow\infty,\;\;\;\sup_T\frac{1}{T}\int_0^T|f(x)|dx<\infty.\]
Then for any continuous  $h:[0,1]\rightarrow\R$
\[\frac{1}{T}\int_0^Th\left(\frac{x}{T}\right)f(x)dx\rightarrow0\mbox{ as }T\rightarrow\infty.\]
\end{lem}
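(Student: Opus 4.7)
The natural strategy is a three-step approximation argument: verify the statement on a simple class of test functions, extend by linearity, and then push through to continuous $h$ by uniform approximation together with the boundedness hypothesis on the averages of $|f|$.

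First, I would handle the case $h = \mathbf{1}_{[a,b]}$ for $0 \le a \le b \le 1$. Then
\[
\frac{1}{T}\int_0^T \mathbf{1}_{[a,b]}(x/T)\, f(x)\, dx
   = \frac{1}{T}\int_{aT}^{bT} f(x)\, dx
   = b\cdot\frac{1}{bT}\int_0^{bT} f(x)\, dx \;-\; a\cdot\frac{1}{aT}\int_0^{aT} f(x)\, dx,
\]
with the convention that the term with $a=0$ is dropped. Both summands tend to $0$ by the first hypothesis on $f$. By linearity, the conclusion then holds for every step function $s$ on $[0,1]$.

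Next, for a general continuous $h$ on $[0,1]$, I would use uniform continuity: given $\varepsilon > 0$, choose a step function $s$ with $\sup_{u \in [0,1]}|h(u)-s(u)| < \varepsilon/(2M+1)$, where $M := \sup_{T>0} \tfrac{1}{T}\int_0^T |f(x)|\,dx < \infty$ by the second hypothesis. Splitting
\[
\frac{1}{T}\int_0^T h\!\left(\tfrac{x}{T}\right) f(x)\,dx
= \frac{1}{T}\int_0^T \bigl(h-s\bigr)\!\left(\tfrac{x}{T}\right) f(x)\,dx
\;+\; \frac{1}{T}\int_0^T s\!\left(\tfrac{x}{T}\right) f(x)\,dx,
\]
the first integral is dominated in absolute value by $\tfrac{\varepsilon}{2M+1}\cdot\tfrac{1}{T}\int_0^T |f(x)|\,dx \le \varepsilon/2$ uniformly in $T$, while the second tends to $0$ as $T\to\infty$ by the step-function case. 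Letting $T$ be large and then $\varepsilon \to 0$ gives the claim.

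The only slightly delicate point is the bookkeeping at $a=0$ (where $\tfrac{1}{aT}\int_0^{aT} f\,dx$ is undefined), but this is handled trivially since the corresponding integral is zero. Otherwise the argument is completely standard: it is essentially the observation that the Cesàro-type vanishing hypothesis on $f$ passes through any bounded Riemann-integrable weight $h$, with uniform approximation by step functions being the bridge that the boundedness of $\tfrac{1}{T}\int_0^T |f|\,dx$ is designed to support.
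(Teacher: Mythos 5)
Your proof is correct and rests on the same idea as the paper's: approximate $h$ uniformly by a piecewise constant function, bound the approximation error via $\sup_T\frac{1}{T}\int_0^T|f|\,dx$, and let the Ces\`aro hypothesis kill the step-function part, which reduces to averages $\frac{1}{sT}\int_0^{sT}f\,dx$ over the partition points. The paper merely organizes the same decomposition differently --- it builds the partition from first-passage times of $h$ (with an extra outer function $g$) so that the resulting quantitative estimate can be reused verbatim with a random $h$ in the proof of Theorem \ref{satz} --- but as a proof of the lemma itself your argument is equivalent.
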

The proof can be found in the appendix.

Let us now fix some process $(Y_t)_{t\in[0,\infty)}$ with state space $\Y$. From now on we assume that 
 $\Y$ is a finite set, which implies that condition \eqref{eq:bounded_coeff} hold true. 
We assume that
$(Y_t)_{t\in[0,\infty)}$ is ergodic in the sense that
\[\frac{1}{t}\int_0^t(1_{\{Y_{s}=y\}}-\pi(y))ds\rightarrow0\mbox{ a.s., as }t\rightarrow\infty\]
for some probability distribution $\pi(y), y \in \Y.$
We increase the speed of the process by looking at
processes $(Y_t^\epsilon)_{t\in[0,\infty)}$, $\epsilon > 0$, having the same distribution as  $(Y_{t/\epsilon})_{t\in[0,\infty)}.$ The first processes have to be adapted to a filtration for which
 $(W_{t})_{t\in[0,\infty)}$ is a Wiener process, and we let  $(X_{t}^\epsilon)_{t\in[0,\infty)}$ be the solution of the corresponding stochastic differential equation
  \[dX_t^\epsilon=b(X_t^\epsilon,Y_t^\epsilon)dt+\sigma(X_t^\epsilon,Y_t^\epsilon)dW_t,\]
with starting value independent of $\epsilon.$ In our proof we will work with $Y_t^\epsilon= Y_{t/\epsilon}.$
For these processes to live on a common filtration with the Wiener process  $(W_{t})_{t\in[0,\infty)}$
we assume that
\[ (W_{t})_{t\in[0,\infty)} \mbox{ and } (Y_{t})_{t\in[0,\infty)} \mbox{ are independent}\]
and then we may use  $(Y_{t/\epsilon})_{t\in[0,\infty)}$ for  $(Y_t^\epsilon)_{t\in[0,\infty)}$. This is no restriction in generality when compared
with \cite{Skorohod}, \cite{Skorohod1}. When $(Y_{t})_{t\in[0,\infty)}$
is a Markov process with discrete state space living on the same filtration as
 $(W_{t})_{t\in[0,\infty)}$ then these two processes are necessarily independent.
 This is shown in \cite{Shreve} for a Poisson process and can be generalized to general Markov processes with discrete state space; see e.g. \cite{Christensen}.

\begin{satz}\label{satz} Let $B:I\times \Y\rightarrow\R $ be such that $B(\cdot,y)$ is continuous for all $y \in \Y.$ Then for all $T>0,y\in \Y$ it holds that
\[\sup_{0\leq r\leq T}\left|\int_0^rB(X_t^\epsilon,y)\left(1_{\{Y_t^\epsilon=y\}}-\pi(y)\right)dt\right|\rightarrow0\mbox{ in probability as }\epsilon\rightarrow0.\]
\end{satz}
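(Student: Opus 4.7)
The strategy is to freeze $X^\epsilon$ on the intervals $[\tau_{i-1},\tau_i)$ from Lemma \ref{lem:N_t estimate} (with $\rho$ small). After this step-function approximation the integral becomes a short sum whose factors are $g^\epsilon(\tau_i)-g^\epsilon(\tau_{i-1})$, where $g^\epsilon(s):=\int_0^s(1_{\{Y_t^\epsilon=y\}}-\pi(y))\,dt$ is small uniformly in $s\in[0,T]$ by ergodicity.

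Fix $\eta,\delta>0$. By Lemma \ref{lem:unif}, pick $K$ so that $P(\sup_{t\leq T}|X_t^\epsilon|\geq K)<\delta/3$ uniformly in $\epsilon$. Since $B(\cdot,y)$ is uniformly continuous on $[-K,K]$, choose $\rho>0$ with $|B(x,y)-B(x',y)|\leq\eta/(2T)$ whenever $|x-x'|\leq\rho$, $x,x'\in[-K,K]$. Construct the stopping times $\tau_i=\tau_i(\epsilon)$ and the counter $n_T=n_T(\epsilon)$ with this $\rho$, and use Lemma \ref{lem:N_t estimate} to fix $K'$ with $P(n_T\geq K')<\delta/3$, uniformly in $\epsilon$.

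On the event $A_\epsilon=\{\sup_t|X_t^\epsilon|\leq K\}\cap\{n_T\leq K'\}$, decompose
\[\int_0^r B(X_t^\epsilon,y)(1_{\{Y_t^\epsilon=y\}}-\pi(y))\,dt=R^\epsilon(r)+S^\epsilon(r),\]
where $R^\epsilon(r)$ collects the terms $(B(X_t^\epsilon,y)-B(X^\epsilon_{\tau_{i-1}},y))(1_{\{Y_t^\epsilon=y\}}-\pi(y))$ integrated over $[\tau_{i-1}\wedge r,\tau_i\wedge r]$, and $S^\epsilon(r)=\sum_i B(X^\epsilon_{\tau_{i-1}},y)[g^\epsilon(\tau_i\wedge r)-g^\epsilon(\tau_{i-1}\wedge r)]$. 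By the choice of $\rho$ and $|1_{\{\cdot\}}-\pi(y)|\leq 1$, $\sup_{r\leq T}|R^\epsilon(r)|\leq\eta/2$, and
\[\sup_{r\leq T}|S^\epsilon(r)|\leq 2(K'+1)M_K\sup_{s\leq T}|g^\epsilon(s)|,\qquad M_K:=\sup_{|x|\leq K}|B(x,y)|<\infty.\]

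It remains to show $\sup_{s\leq T}|g^\epsilon(s)|\to 0$ in probability. Writing $g^\epsilon(s)=\epsilon G(s/\epsilon)$ with $G(u)=\int_0^u(1_{\{Y_v=y\}}-\pi(y))\,dv$, ergodicity yields $G(u)/u\to 0$ a.s., and a standard splitting argument at a large $u_0$ gives $\sup_{u\leq N}|G(u)|/N\to 0$ a.s. (this is Lemma \ref{lem:analytic} applied with $h\equiv 1$, together with passage to suprema). Hence $\sup_{s\leq T}|g^\epsilon(s)|\to 0$ a.s., and for $\epsilon$ small enough the bound on $S^\epsilon$ is $<\eta/2$ with probability $\geq 1-\delta/3$. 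Combining the three estimates yields the claim. The main technical point is that the tail controls on $X^\epsilon$ and on $n_T$ must be uniform in $\epsilon$, as delivered by Lemmas \ref{lem:unif} and \ref{lem:N_t estimate}; once they are in place, the step-function approximation reduces the problem to the ergodic input, which is the only remaining moving piece.
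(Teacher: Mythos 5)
Your proposal is correct and follows essentially the same route as the paper: uniform tightness of $X^\epsilon$ (Lemma \ref{lem:unif}), uniform continuity of $B(\cdot,y)$ to freeze the integrand on the oscillation intervals $[\tau_{i-1},\tau_i)$, the uniform-in-$\epsilon$ bound on the number of such intervals (Lemma \ref{lem:N_t estimate}), and an Abel-type summation reducing everything to the ergodic averages of $1_{\{Y=y\}}-\pi(y)$ — which is exactly the decomposition the paper implements via the proof of Lemma \ref{lem:analytic}. Your only (harmless) deviation is to prove $\sup_{s\leq T}|g^\epsilon(s)|\to 0$ a.s.\ directly, using that the integrand is bounded by $1$ near $s=0$, which lets you skip the paper's separate treatment of $r\leq r_0$ and the $\tau_1\geq\rho'$ estimate.
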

\begin{proof} Fix $T>0,\;y\in \Y$. Let $\eta>0$. We want to show that
\[P\left(\sup_{r\leq T}\left|\int_0^rB(X_t^\epsilon,y)\left(1_{\{Y_t^\epsilon=y\}}-\pi(y)\right)dt\right|>\eta\right)\rightarrow0\mbox{ as }\epsilon\rightarrow0.\]
Let $\delta>0$. Choose $K$ according to Lemma \ref{lem:unif} such that
\[P\left(\sup_{t\leq T}|X_t^\epsilon|\geq K\right)\leq \delta\mbox{ for all }\epsilon>0.\]
Let
\[A_\epsilon=\left\{\sup_{t\leq T}|X_t^\epsilon|<K\right\}.\]
The following estimates are always considered on $A_\epsilon$. Let $$C=\sup\{|B(x,y)|:|x|\leq K\}<\infty;$$ thus for $r_0=\eta/C$
\[\sup_{r\leq r_0}\int_0^r\left|B(X_t^\epsilon,y)\left(1_{\{Y_t^\epsilon=y\}}-\pi(y)\right)\right|dt\leq \eta.\]
By a change of variable, we obtain
\begin{align*}
\int_0^rB(X_t^\epsilon,y)(1_{\{Y_t^\epsilon=y\}}-\pi(y))dt=&\int_0^rB(X_t^\epsilon,y)(1_{\{Y_{t/\epsilon}=y\}}-\pi(y))dt\\
=&{\epsilon}\int_0^{r/\epsilon}B(X_{\epsilon s}^\epsilon,y)(1_{\{Y_{s}=y\}}-\pi(y))ds.
\end{align*}
Note that the integral
\begin{align*}
\frac{\epsilon}{r}\int_0^{r/\epsilon}B(X_{rs \epsilon/r}^\epsilon,y)(1_{\{Y_{s}=y\}}-\pi(y))ds
\end{align*}
has exactly the form considered in the proof of Lemma \ref{lem:analytic} (see Appendix \ref{appendixa}) with $t=r/\epsilon$, $g=B(\cdot,y),\;h(s)=X_{rs}^\epsilon,$ and $f(s)=1_{\{Y_s=y\}}-\pi(y)$. On $A_\epsilon$ one has
\[|X_{rs}^\epsilon|\leq K \mbox{ for all }s\leq 1.\]
Let $\delta>0$. Since $B(\cdot,y)$\ is uniformly continuous on $[-K,K]$ we may choose $\rho>0$ such that
\[|B(x,y)-B(x',y)|\leq\frac{\eta}{2T}\;\;\;\;\mbox{ for }|x-x'|\leq \rho,\;x,x'\in[-K,K].\]
Next, let $\sigma_0^{\epsilon,r}=0$,
\begin{align*}
&\sigma_i^{\epsilon,r}=\inf\{s\geq \sigma_{i-1}^{\epsilon,r}:|X_{rs}^\epsilon-X^\epsilon_{\sigma_{i-1}^{\epsilon,r}}|=\rho\}\wedge 1,
&n^{\epsilon,r}=\sup\{i:\sigma_i^{\epsilon,r}<1\}.
\end{align*}
The estimate in in the proof of Lemma \ref{lem:analytic} with $C_1=1,C_2=C$ yields
\begin{align*}
\left|\epsilon\int_0^{r/\epsilon}B(X_{rs\epsilon/r}^\epsilon,y)(1_{\{Y_{s}=y\}}-\pi(y))ds\right|
\leq r\frac{\eta}{2}+2rC\sum_{i=1}^{n^{\epsilon,r}+1}\left|\epsilon\int_0^{\sigma_i^{\epsilon,r}r/\epsilon}(1_{\{Y_{s}=y\}}-\pi(y))dy\right|.
\end{align*}
Setting $\tau_0^\epsilon=0$,
\begin{align*}
&\tau_i^\epsilon=\inf\{s\geq \tau_{i-1}^\epsilon:|X_s^\epsilon-X^\epsilon_{\tau_{i-1}^\epsilon}|=\rho\}\wedge T,
&n^\epsilon=\sup\{k:\tau_k^\epsilon<T\},
\end{align*}
it follows that
$r\sigma_i^{\epsilon,r}=\tau_i^\epsilon\wedge r,\;\;n^{\epsilon,r}\leq n^\epsilon$ and
\begin{align*}
\left|\epsilon\int_0^{r/\epsilon}B(X_{\epsilon s}^\epsilon,y)(1_{\{Y_{s}=y\}}-\pi(y))ds\right|
\leq\frac{\eta}{2}+2C\sum_{i=1}^{n^{\epsilon,r}+1}\epsilon\int_{0}^{\tau_i^\epsilon\wedge r/\epsilon}(1_{\{Y_{s}=y\}}-\pi(y))ds.
\end{align*}
Next, according to Lemma \ref{lem:N_t estimate}, we may choose $K',\rho'\leq r_0$ such that
\[P(n^\epsilon\leq K')\geq 1-\delta,\;\;P(\tau_1^\epsilon\geq \rho')\geq 1-\delta\mbox{ for all }\epsilon>0,\]
hence also
\[P(\tau_1^\epsilon\wedge r\geq \rho')\geq 1-\delta\mbox{ for all }\epsilon>0,\;r\geq r_0.\]
Due to the ergodicity assumption
\[\frac{1}{t}\int_0^t(1_{\{Y_{s}=y\}}-\pi(y))ds\rightarrow0\mbox{ a.s., as }t\rightarrow\infty.\]
So we may choose $\epsilon_0>0$ such that for
\[D_\epsilon=\left\{\sup_{t\geq \rho'}\big|\epsilon\int_0^{t/\epsilon}(1_{\{Y_{s}=y\}}-\pi(y))ds\big|\leq \frac{\eta}{2}\frac{1}{2(K'+1)C}\right\}\]
we have $P(D_\epsilon)\geq 1-\delta\mbox{ for all }\epsilon\leq \epsilon_0$. \\ Altogether, we obtain on $A_\epsilon\cap\{n^\epsilon\leq K'\}\cap\{\tau_1^\epsilon\geq \rho'\}\cap D_\epsilon$ that for all $r_0\leq r\leq T,\;0<\epsilon\leq \epsilon_0$
\begin{align*}
\left|\epsilon\int_0^{r/\epsilon}B(X_{\epsilon s}^\epsilon,y)(1_{\{Y_{s}^\epsilon =y\}}-\pi(y))ds\right|
\leq\frac{\eta}{2}+2C(n^\epsilon+1)\frac{\eta}{2}\frac{1}{2(K'+1)C}\leq	\eta,
\end{align*}
hence
\begin{align*}
\sup_{0<r\leq T}\left|\int_0^{r}B(X_{s}^\epsilon,y)(1_{\{Y_{s}^\epsilon =y\}}-\pi(y))ds\right|\leq	 \eta,
\end{align*}
the case $r \leq r_0$ being obvious as remarked in the beginning of the proof. It follows that
\begin{align*}
&P\left(\sup_{0<r\leq T}\left|\int_0^{r}B(X_{s}^\epsilon,y)(1_{\{Y_{s}^\epsilon =y\}}-\pi(y))ds\right|>\eta\right)\\
&\leq P(A_\epsilon^c)+P(n^\epsilon\geq K')+P(\tau_1^\epsilon\leq \rho')+P( D_\epsilon^c)\leq 4\delta,
\end{align*}
for all $0<\epsilon\leq \epsilon_0$.
\end{proof}
Now, the previous result can be utilized to prove the convergence in distribution for fast switching diffusions as follows.\\
Define \begin{eqnarray*} &&\hat{b} : I \rightarrow \R,\,\; \hat{b} (x)= \sum_{y \in \Y}b(x,y)\pi(y),\\
 &&\hat{\sigma} : I \rightarrow \R,\,\; \hat{\sigma} (x)= \sum_{y \in \Y}\sigma(x,y)\pi(y),
\end{eqnarray*}
and
$(\hat{X}_{t})_{t\in[0,\infty)}$ as the solution of the corresponding stochastic differential equation
  \[d\hat{X}_t=\hat{b}(\hat{X}_t)dt+\hat{\sigma}(\hat{X}_t)dW_t.\]

\begin{satz}\label{satz2}
\[ (X^\epsilon_{t})_{t\in[0,\infty)} \rightarrow (\hat{X}_{t})_{t\in[0,\infty)}
\mbox{ in distribution as }\epsilon \rightarrow 0.\]
\end{satz}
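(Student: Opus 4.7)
The plan is to apply a convergence theorem for semimartingales (Jacod--Shiryaev style). Each $X^\epsilon$ is a continuous semimartingale with characteristics $(B^\epsilon, C^\epsilon, 0)$ given by
\[B^\epsilon_t = \int_0^t b(X^\epsilon_s, Y^\epsilon_s)\,ds, \qquad C^\epsilon_t = \int_0^t \sigma^2(X^\epsilon_s, Y^\epsilon_s)\,ds,\]
while $\hat X$ has characteristics $(\int_0^\cdot \hat b(\hat X_s)\,ds, \int_0^\cdot \hat\sigma^2(\hat X_s)\,ds, 0)$. The proof reduces to three steps: (i) replace the integrands in $B^\epsilon, C^\epsilon$ by their $\pi$-averages, with the error going to $0$ uniformly on $[0,T]$ in probability; (ii) establish tightness of $(X^\epsilon)_{\epsilon>0}$; (iii) identify any weak subsequential limit as $\hat X$.

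For (i), since $\Y$ is finite, $\sum_{y\in\Y}1_{\{Y^\epsilon_s=y\}}\equiv 1$ and $\sum_y \pi(y)=1$, yielding the telescoping identity
\[B^\epsilon_r - \int_0^r \hat b(X^\epsilon_s)\,ds = \sum_{y\in\Y}\int_0^r b(X^\epsilon_s, y)\bigl(1_{\{Y^\epsilon_s=y\}}-\pi(y)\bigr)\,ds.\]
Applying Theorem \ref{satz} term by term with $B(\cdot,y)=b(\cdot,y)$ gives $\sup_{r\leq T}|B^\epsilon_r - \int_0^r \hat b(X^\epsilon_s)\,ds|\to 0$ in probability, and the same argument with $\sigma^2$ in place of $b$ produces the corresponding convergence for $C^\epsilon$.

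For (ii), tightness in $C([0,T],\R)$ follows by combining Lemma \ref{lem:unif} (uniform-in-$\epsilon$ tail bound on $\sup_{t\leq T}|X^\epsilon_t|$) with a moment estimate of the form $E|X^\epsilon_t - X^\epsilon_s|^p \leq C(t-s)^{p/2}$ for some $p>2$, which is obtained from the linear growth of $b,\sigma$, the Burkholder--Davis--Gundy inequality, and Lemma \ref{lem:moment_estimate}; Kolmogorov's criterion then provides the required equicontinuity in probability.

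The hardest step is the limit identification. For a weakly convergent subsequence $X^{\epsilon_n}\Rightarrow X^*$, pass to a Skorokhod coupling with a.s.\ uniform convergence on $[0,T]$; by continuity of $\hat b$ and $\hat\sigma^2$ the continuous mapping theorem gives $\int_0^\cdot \hat b(X^{\epsilon_n}_s)\,ds \to \int_0^\cdot \hat b(X^*_s)\,ds$ uniformly, and analogously for $\hat\sigma^2$. Combined with (i), this shows that $X^*$ has characteristics $(\int_0^\cdot \hat b(X^*_s)\,ds, \int_0^\cdot \hat\sigma^2(X^*_s)\,ds, 0)$, i.e.\ it solves the martingale problem for the averaged SDE. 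Assuming uniqueness of weak solutions (implicit in the well-posedness of the SDE for $\hat X$), $X^*\stackrel{d}{=}\hat X$; since every subsequence has a further subsequence converging to this same limit, $X^\epsilon\Rightarrow\hat X$.
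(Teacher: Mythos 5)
Your proposal follows the same route as the paper: decompose the drift and diffusion characteristics via the telescoping identity over the finite set $\Y$, apply Theorem \ref{satz} termwise, and conclude by a semimartingale limit theorem. The paper simply cites \cite[Theorem IX.3.21]{JacodShiryaev} as a black box, whereas you unpack it into tightness plus identification of the limit via the martingale problem; that is more work but logically equivalent, and your steps (ii) and (iii) are exactly the content of the cited theorem (weak uniqueness for the averaged SDE is indeed needed there too, as a hypothesis of IX.3.21). There is, however, one substantive point where your version diverges from the paper's, and where you are arguably more careful: the second characteristic of $X^\epsilon$ is $C^\epsilon_t=\int_0^t\sigma^2(X^\epsilon_s,Y^\epsilon_s)\,ds$, so applying Theorem \ref{satz} with $B(\cdot,y)=\sigma^2(\cdot,y)$, as you do, identifies the limiting quadratic characteristic as $\int_0^\cdot\sum_{y}\sigma^2(X^*_s,y)\pi(y)\,ds$. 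The paper instead averages $\sigma$ itself, defining $\hat\sigma(x)=\sum_y\sigma(x,y)\pi(y)$, and its proof says ``similarly for $\sigma,\hat\sigma$''; but $\bigl(\sum_y\sigma(x,y)\pi(y)\bigr)^2\neq\sum_y\sigma^2(x,y)\pi(y)$ in general (Jensen gives $\leq$, with equality only when $\sigma(x,\cdot)$ is constant on $\Y$). Your argument therefore proves convergence to the diffusion with coefficient $\bigl(\sum_y\sigma^2(\cdot,y)\pi(y)\bigr)^{1/2}$, which is the correct averaged limit (and the one appearing in the Skorokhod references), not to the $\hat X$ literally defined in the paper. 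You should state explicitly that your $\hat\sigma^2$ means $\widehat{\sigma^2}=\sum_y\sigma^2(\cdot,y)\pi(y)$ and that the limiting SDE has diffusion coefficient $\smash{\widehat{\sigma^2}}^{1/2}$; with that clarification your proof is complete, modulo the continuity of $b(\cdot,y)$ and $\sigma^2(\cdot,y)$ needed to invoke Theorem \ref{satz}, which should be recorded as a standing assumption.
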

\begin{proof}
The infinitesimal characteristics are
\[ b(X_t^\epsilon, Y_t^\epsilon),\; \sigma(X_t^\epsilon, Y_t^\epsilon) \mbox{ for the semimartingale }  (X^\epsilon_{t})_{t\in[0,\infty)},\]
and
\[ \hat{b}(\hat{X}_t),\; \hat{\sigma}(\hat{X}_t)\mbox{ for the semimartingale }
(\hat{X}_{t})_{t\in[0,\infty)}.\]
Theorem \ref{satz} shows that for all $T>0$
\[\sup_{0\leq r\leq T}\int_0^r \left( b(X_t^\epsilon,Y_t^\epsilon) -\hat{b}(X_t^\epsilon)\right)dt\rightarrow 0\mbox{ in probability as }\epsilon\rightarrow0,\]
and similarly for $\sigma, \hat{\sigma}.$
This implies the assertion by the semimartingale convergence  theorem; see \cite[Theorem 3.21, Chapter IX]{JacodShiryaev}.
\end{proof}

\section{Discrete state processes}\label{sec:discrete}
In the case of a discrete state space we start with a c\`adl\`ag process $(Y_t)_{t\in[0,\infty)}$ with a finite state space $\Y$, assumed a subset of $\R$ without loss of generality, and discrete jump times $\gamma_0=0<\gamma_1<\gamma_2<...$. Here, the term discrete  jump times means that these times are strictly increasing and the process $Y$ is constant on $[\gamma_{i+1},\gamma_i)$ for all $i$. Furthermore, let $I\subseteq \R$ be countable, and for each $y\in \Y$ let $q(\cdot,\cdot|y)$ be an intensity matrix. Conditionally, we generate the switching process in the following way:  In zero, we start a continuous time Markov chain $\tilde X^0$ with starting state $x_0$ and intensity matrix $q(\cdot,\cdot|Y_0)$. In the first jump time $\gamma_1$, we start a new chain $\tilde X^1$ with starting point $\tilde X^0_{\gamma_1}$ and intensity matrix $q(\cdot,\cdot|Y_{\gamma_1})$. In the same way we define $\tilde X^{i+1}$, starting in $\tilde X^{i}_{\gamma_i}$. We define the switching process $X=X^Y$ by setting  $X_t=\tilde X^i_{t-\gamma_i}$ for $\gamma_i\leq t<\gamma_{i+1}$.\\
Note that when using diffusion processes $\tilde X^i$\ with coefficients depending on $y$ instead of Markov chains in the previous construction, (ignoring some technical issues) the process $X$ is a switching diffusion as considered in Section \ref{sec:diffusion}. In this section we consider the jump counterpart for the results obtained there.

We make the assumption that for some finite set  $J\subseteq \R\setminus \{0\}$
\[q(i,i+j|y)=0\mbox{ for all }i\in I,\;y\in \Y,\;j\not\in J,\]
so $i+J$ is the set of states which can be reached from $i$.
Thus $q(i|y):=\sum_{i'\not=i}q(i,i'|y)<\infty$ for all $y\in\Y$, and we furthermore assume that
\[q:=\sup_{i,y}q(i|y)<\infty.\]
If the state space $I$ of the process $X$ is finite, these assumptions are of course fulfilled, and they imply that there is no explosion in finite time.\\
Define the jumps time of $(X_t)_{t\geq 0}$ by $\tau_0=0$ and
\[\tau_i=\inf\{t\geq \tau_{i-1}:X_t\not=X_{\tau_{i-1}}\}.\]
The following lemma is the analogon to Lemma \ref{lem:N_t estimate} and gives again an estimate, which is uniform in all modulating processes $Y$. The proof now turns out to be much easier due to the discrete nature of situation.
\begin{lem}\label{lem:asmpt_discrete}
Let\ $T>0$, $n_T=\sup\{k:\tau_k<T\}$. Let\ $\delta>0$. Then there exists $\rho'>0,\;K'\in\N$ such that
\[P(\tau_1\geq \rho')\geq 1-\delta,\;\;P(n_T<K')\geq 1-\delta\] for all processes $(Y_t)_{t\in[0,\infty)}$ taking values in $\Y$.
\end{lem}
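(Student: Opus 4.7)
The key observation is that the hypothesis $q := \sup_{i,y} q(i|y) < \infty$ gives a uniform upper bound on the total instantaneous jump intensity of $X$, independent of the modulating process $(Y_t)$. The plan is to exploit this to stochastically dominate the jump-counting process of $X$ by a Poisson process of rate $q$; both assertions then follow uniformly in $Y$.

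For the first assertion, I would condition on the path of $Y$. Between consecutive jumps of $Y$ the process $X$ is, by construction, a time-homogeneous CTMC, so the holding time in its current state $X_0$ has rate $q(X_0|Y_{\gamma_i})$; by the memorylessness of the exponential distribution these pieces glue together to give
\[
P(\tau_1 > t \mid (Y_s)_{s\le t}) \;=\; \exp\!\Bigl(-\int_0^t q(X_0 \mid Y_s)\,ds\Bigr) \;\ge\; e^{-qt}.
\]
Taking expectations yields $P(\tau_1 \geq \rho') \geq e^{-q\rho'}$ for every choice of modulating process, so choosing $\rho' > 0$ with $e^{-q\rho'} \geq 1-\delta$ gives the first estimate.

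For the second assertion I would iterate the same argument via the strong Markov property at each $\tau_i$. Conditional on $\mathcal{F}_{\tau_i}$, the post-$\tau_i$ process is of the same form as the original one (with a new initial state), so the identical computation produces $P(\tau_{i+1}-\tau_i > t \mid \mathcal{F}_{\tau_i}) \geq e^{-qt}$. Hence the inter-arrival sequence $(\tau_{i+1}-\tau_i)_i$ stochastically dominates an i.i.d.\ sequence of $\mathrm{Exp}(q)$ variables, uniformly in $(Y_t)$; equivalently, the counting process $k \mapsto \tau_k$ of $X$-jumps is dominated by a Poisson process of rate $q$. Consequently $P(n_T \geq K') \leq P(N^*_T \geq K')$ where $N^*$ is $\mathrm{Poisson}(qT)$, and since the latter probability tends to $0$ as $K' \to \infty$ with a bound depending only on $q$ and $T$, an appropriate choice of $K'$ suffices.

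The only place that requires care is the transfer of the uniform exponential lower bound across the jumps of $Y$, since $X$ is not itself a CTMC; but this is handled cleanly by the conditional construction of $X$ given $Y$ together with the memoryless property, and so the argument is essentially routine once the uniform intensity bound $q<\infty$ is in place.
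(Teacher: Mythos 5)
Your proposal is correct and follows essentially the same route as the paper: conditioning on the path of $Y$ to get the uniform exponential lower bound $e^{-qt}$ on the first holding time, then iterating via the (conditional) Markov property to dominate the jump-counting process of $X$ by a Poisson process of rate $q$, from which both estimates follow uniformly in the modulating process.
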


\begin{proof}
Denote the counting process of $(X_t)_{t\in[0,\infty)}$ by $(N_t)_{t\in[0,\infty)}$, and by $(N_t^i)_{t\in[0,\infty)}$ for $(X_t^i)_{t\in[0,\infty)}$, i.e. $N_t$ (resp. $N_t^i$) denotes the number of jumps of $X$ (resp. $X^i$) before time $t$. Let the random index $j_t$ be given by $\gamma_{j_t}\leq t<\gamma_{j_t+1}$. Then we consider
\[\{\tau_1>t\}=\{N_t=0\}=\{N_{\gamma_1}^0=0\}\cap \{N_{\gamma_2-\gamma_1}^1=0\}\cap...\cap \{N_{t-\gamma_{j_t}}^{j_t}=0\}.\]
It follows by conditioning that
\begin{align*}
P(N_t=0)&=E\left(e^{-q(X_0|Y_0)\gamma_1}e^{-q(X_{\gamma_1}|Y_{\gamma_1})({\gamma_2}-\gamma_1)}...e^{-q(X_{\gamma_{j_t}}|Y_{\gamma_{j_t}})(t-\gamma_{j_t})}\right)\geq e^{-qt}.
\end{align*}
Now, let $(N^*_t)_{t\in[0,\infty)}$ denote a Poisson process with intensity $q$ and corresponding jump times $\tau_j^*,j\geq 0.$ Then the previous arguments show that $P(\tau_1>t)\geq P(\tau_1^*>t)$. Together with the Markov property, the same arguments can be used for $\tau_j,\tau_j^*,\;j>1$, so that $P(\tau_j>t)\geq P(\tau_j^*>t)$. Hence
\begin{align*}
P(\tau_1\geq \rho)\geq P(\tau_1^*\geq \rho)\rightarrow1&\mbox{ as }\rho\rightarrow0,\\
P(n_T\leq K)=P(\tau_{k+1}>T)\geq P(\tau_{K+1}^*>T)\rightarrow 1&\mbox{ as }K\rightarrow\infty.
\end{align*}
\end{proof}

\begin{lem}\label{cor:unif_estimate_discrete}
Assume the situation of Lemma \ref{lem:asmpt_discrete}. For any $\epsilon>0$ there exists $K$  such that
\[P\left(\sup_{t\leq T}|X_t|\geq K\right)\leq \epsilon\] 
uniformly in all $\Y$-valued modulating processes $(Y_t)_{t\in[0,\infty)}$ for $X=X^Y$.
\end{lem}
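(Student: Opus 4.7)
My plan is to bound $\sup_{t \leq T}|X_t|$ directly in terms of the total number of jumps $n_T$, using the fact that all jumps have uniformly bounded size. Since the set $J \subseteq \R \setminus \{0\}$ of admissible jump increments is assumed finite, put $M := \max_{j \in J}|j| < \infty$; then every jump of $X$ has magnitude at most $M$, and between consecutive jumps $X$ is constant. Consequently
\[\sup_{t \leq T}|X_t| \leq |X_0| + M\, n_T \quad \text{a.s.,}\]
where $X_0 = x_0$ is the fixed deterministic starting state.

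Given $\epsilon > 0$, I would then invoke Lemma \ref{lem:asmpt_discrete} to obtain $K' \in \N$ with $P(n_T < K') \geq 1 - \epsilon$ uniformly in all $\Y$-valued modulating processes $(Y_t)_{t\in[0,\infty)}$. Setting $K := |x_0| + M K'$ yields
\[P\left(\sup_{t \leq T}|X_t| \geq K\right) \leq P(n_T \geq K') \leq \epsilon,\]
and the desired uniformity in $(Y_t)_{t\in[0,\infty)}$ is inherited from Lemma \ref{lem:asmpt_discrete}.

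There is no real obstacle: the finiteness of $J$ makes jump sizes uniformly bounded, so controlling $|X|$ reduces to controlling the number of jumps, which is exactly what Lemma \ref{lem:asmpt_discrete} provides. This is considerably simpler than the diffusion counterpart Lemma \ref{lem:unif}, whose proof required moment estimates together with a Brownian time-change argument to compensate for the continuous fluctuations.
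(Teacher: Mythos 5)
Your argument is correct. The pathwise bound $\sup_{t\leq T}|X_t|\leq |x_0|+M\,n_T$ (with $M=\max_{j\in J}|j|$) is exactly the structural observation the paper also exploits, but you then conclude differently: you feed the high-probability bound $P(n_T<K')\geq 1-\epsilon$ from the \emph{statement} of Lemma \ref{lem:asmpt_discrete} directly into the inclusion $\{\sup_{t\leq T}|X_t|\geq K\}\subseteq\{n_T\geq K'\}$, whereas the paper applies Markov's inequality to $E\bigl(\sup_{t\leq T}|X_t|\bigr)\leq |x_0|+E(n_T+1)\sup_{j\in J}|j|$ and then has to reach back into the \emph{proof} of Lemma \ref{lem:asmpt_discrete} to extract the uniform moment bound $E n_T\leq E N^*_T=qT$ from the Poisson domination. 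Your route is slightly more self-contained, since it uses only what the preceding lemma asserts; the paper's route yields the marginally stronger conclusion that the first moment of $\sup_{t\leq T}|X_t|$ is bounded uniformly in $Y$. One small point of care: with $n_T=\sup\{k:\tau_k<T\}$ a jump could in principle occur exactly at time $T$, which is why the paper writes $n_T+1$ rather than $n_T$; this is harmless for you (either note that such an event has probability zero, or simply take $K=|x_0|+M(K'+1)$), but worth a word.
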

\begin{proof}
Using Markov's inequality we have
\[P\left(\sup_{t\leq T}|X_t|\geq K\right)\leq E(\sup_{t\leq T}|X_t|)/K\leq (|x_0|+E(n_T+1)\sup_{j\in J}|j|)/K.\]
Therefore, it is enough to show that $E(n_T)$ is bounded uniformly in $Y$. But this holds by the proof of the preceding Lemma since $En_T$ is not larger than the expected number of jumps of the Poisson process $(N^*_t)_{t\in[0,\infty)}$ in $[0,T]$, which is well-known to be finite.
\end{proof}

As in Section \ref{sec:diffusion} we fix a process $(Y_t)_{t\in[0,\infty)}$ with the property
\[\frac{1}{t}\int_0^t(1_{\{Y_{s}=y\}}-\pi(y))ds\rightarrow0\mbox{ a.s., as }t\rightarrow\infty.\]
Furthermore, we take $(Y^\epsilon_t)_{t\in[0,\infty)} = (Y_{t/\epsilon})_{t\in[0,\infty)}$ with corresponding  $(X^\epsilon_t)_{t\in[0,\infty)}$.


\begin{satz}  Let $Q:I\times \Y\rightarrow\R$ be such that
$\sup_{|i|\leq K}|Q(i,y)|<\infty\mbox{ for all }y\in\Y,\;K>0.$
For all $y\in\Y,\,T>0$ it holds that
\[\sup_{0\leq r\leq T}\left|\int_0^rQ(X_t^\epsilon,y)\left(1_{\{Y_t^\epsilon=y\}}-\pi(y)\right)dt\right|\rightarrow0\mbox{ in probability as }\epsilon\rightarrow0.\]
\end{satz}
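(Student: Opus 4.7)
The plan is to mirror the proof of Theorem \ref{satz}, exploiting one key simplification: between consecutive jump times of $(X_t^\epsilon)$ the process is constant, so $Q(X_t^\epsilon, y)$ is already piecewise constant. Consequently no step-function approximation based on uniform continuity, nor any appeal to Lemma \ref{lem:analytic}, will be needed.

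Fix $T, y, \eta, \delta > 0$. First I would use Lemma \ref{cor:unif_estimate_discrete} to pick $K$ with $P(A_\epsilon^c) \leq \delta$ uniformly in $\epsilon$, where $A_\epsilon = \{\sup_{t \leq T}|X_t^\epsilon| < K\}$. On $A_\epsilon$ the integrand has modulus at most $C := \sup_{|i| \leq K}|Q(i, y)| < \infty$, so the range $r \leq r_0 := \eta/C$ is handled trivially.

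For $r > r_0$, let $0 = \tau_0^\epsilon < \tau_1^\epsilon < \cdots$ be the jump times of $X^\epsilon$ and set $n^\epsilon = \sup\{k : \tau_k^\epsilon < T\}$. Since $X^\epsilon$ is constant on each $[\tau_{i-1}^\epsilon, \tau_i^\epsilon)$, one decomposes
\[ \int_0^r Q(X_t^\epsilon, y)(1_{\{Y_t^\epsilon=y\}}-\pi(y))\,dt = \sum_{i=1}^{n^\epsilon+1} Q(X_{\tau_{i-1}^\epsilon}^\epsilon, y)\int_{\tau_{i-1}^\epsilon \wedge r}^{\tau_i^\epsilon \wedge r}(1_{\{Y_t^\epsilon=y\}}-\pi(y))\,dt. \]
Splitting each inner integral as a difference of integrals starting at $0$, using $|Q| \leq C$ on $A_\epsilon$, and changing variables $s = t/\epsilon$, one bounds the absolute value of this expression by
\[ 2C \sum_{i=1}^{n^\epsilon+1} \left|\epsilon \int_0^{(\tau_i^\epsilon \wedge r)/\epsilon}(1_{\{Y_s=y\}}-\pi(y))\,ds\right|. \]

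Next I would apply Lemma \ref{lem:asmpt_discrete} to obtain, uniformly in $\epsilon$, constants $K' \in \N$ and $\rho' \in (0, r_0]$ with $P(n^\epsilon \leq K') \geq 1-\delta$ and $P(\tau_1^\epsilon \geq \rho') \geq 1-\delta$. On these events $\tau_i^\epsilon \wedge r \geq \rho'$ for all $i \geq 1$ and $r \geq r_0$, and the ergodicity hypothesis yields $\epsilon_0 > 0$ such that
\[ D_\epsilon = \left\{\sup_{t \geq \rho'}\left|\epsilon \int_0^{t/\epsilon}(1_{\{Y_s=y\}}-\pi(y))\,ds\right| \leq \frac{\eta}{2C(K'+1)}\right\} \]
satisfies $P(D_\epsilon) \geq 1-\delta$ for all $\epsilon \leq \epsilon_0$. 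On $A_\epsilon \cap \{n^\epsilon \leq K'\} \cap \{\tau_1^\epsilon \geq \rho'\} \cap D_\epsilon$ the displayed sum is at most $\eta$, and combined with the trivial case $r \leq r_0$ this gives the uniform bound in $r \in [0, T]$; the complementary event has probability at most $4\delta$. I do not foresee a genuine obstacle: the argument is strictly simpler than in the diffusion setting because the replacement of $Q(X_t^\epsilon, y)$ by $Q(X_{\tau_{i-1}^\epsilon}^\epsilon, y)$ is exact rather than approximate, so the only point of care is scaling the ergodic tolerance by $(K'+1)^{-1}$ to absorb the at most $n^\epsilon + 1 \leq K' + 1$ summands.
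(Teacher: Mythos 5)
Your proposal is correct and follows essentially the same route as the paper: both exploit that $X^\epsilon$ is piecewise constant to decompose the integral exactly over the jump times (so Lemma \ref{lem:analytic} is not needed), bound the resulting sum by $2C\sum_i\bigl|\epsilon\int_0^{(\tau_i^\epsilon\wedge r)/\epsilon}(1_{\{Y_s=y\}}-\pi(y))\,ds\bigr|$, and then combine Lemma \ref{cor:unif_estimate_discrete}, Lemma \ref{lem:asmpt_discrete} and the ergodicity assumption exactly as in the proof of Theorem \ref{satz}. The only difference is that you spell out the concluding estimate that the paper abbreviates with ``the proof is concluded as in Theorem \ref{satz}.''
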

\begin{proof}
Fix $y,T$. For $\delta>0$ choose $K$ according to Lemma \ref{cor:unif_estimate_discrete} such that
\[P\left(\sup_{t\leq T}|X_t^\epsilon|\geq K\right)\leq \delta \mbox{ for all }\epsilon>0.\]
Write
\[A_\epsilon=\{\sup_{t\leq T}|X_t^\epsilon|\leq K\},\;\;C=\sup\{|Q(i,y)|:|i|\leq K\}<\infty.\]
Then for $r_0=\eta/C$ on $A_\epsilon$
\[\sup_{r\leq r_0}\int_0^r|Q(X_t^\epsilon,y)(1_{\{Y_t=y\}}-\pi(y))|dt\leq \eta.\]
We may proceed with a simplified version of the proof of Theorem \ref{satz} without the use of Lemma \ref{lem:analytic}. It holds that
\begin{align*}
\int_0^rQ(X_t^\epsilon,y)(1_{\{Y_t^\epsilon=y\}}-\pi(y))dt={\epsilon}\int_0^{r/\epsilon}Q(X_{rs\epsilon/r }^\epsilon,y)(1_{\{Y_{s}=y\}}-\pi(y))ds.
\end{align*}
Define $\sigma_i^{\epsilon,r}$ as in the proof of Theorem \ref{satz} replacing $=\rho$ by $>0$ to obtain the jump times, with corresponding $n^{\epsilon,r}$. Then, on $A_\epsilon$
\begin{align*}
\left|{\epsilon}\int_0^{r/\epsilon}Q(X_{rs\epsilon/r }^\epsilon,y)(1_{\{Y_{s}^\epsilon =y\}}-\pi(y))ds\right|&\leq \left|{\epsilon}\sum_{i=1}^{n^{\epsilon,r}+1}Q(X_{r\sigma_i^{\epsilon,r}}^\epsilon,y)\int_{\sigma_{i-1}^{\epsilon,r}r/\epsilon}^{\sigma_{i}^{\epsilon,r}r/\epsilon}(1_{\{Y_{s}=y\}}-\pi(y))ds\right|\\
&\leq 2C(n^{\epsilon,r}+1)\left|{\epsilon}\int_{0}^{\sigma_{n^{\epsilon,r}+1}^{\epsilon,r}r/\epsilon}(1_{\{Y_{s}=y\}}-\pi(y))ds\right|
\end{align*}
Using Lemma \ref{lem:asmpt_discrete}, the proof is concluded as in Theorem \ref{satz}.
\end{proof}

Define the intensity matrix
\[ \hat{q}(\cdot,\cdot) = \sum_{y \in \Y}q(\cdot,\cdot|y)\pi(y)\]
with corresponding Markov process $(\hat{X}_t)_{t\in[0,\infty)}$.

\begin{satz}
\[ (X^\epsilon_t)_{t\in[0,\infty)} \rightarrow (\hat{X}_{t})_{t\in[0,\infty)}
\mbox{ in distribution as }\epsilon \rightarrow 0.\]
\end{satz}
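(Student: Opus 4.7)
The plan is to mimic the proof of Theorem \ref{satz2} in the pure-jump setting, replacing the diffusion characteristics by the compensator of the jump measure and applying the same semimartingale convergence result. Both $(X^\epsilon_t)_{t\in[0,\infty)}$ and $(\hat X_t)_{t\in[0,\infty)}$ are pure jump processes whose jumps lie in the finite set $J$, so their continuous martingale parts vanish and, since the jumps are bounded, the drift under identity truncation is determined by the compensator. The compensator of the jump measure of $X^\epsilon$ is concentrated on the atoms $\{j\}$, $j\in J$, with intensity $q(X^\epsilon_{t-},X^\epsilon_{t-}+j\mid Y^\epsilon_t)$, while the compensator of $\hat X$ carries intensity $\hat q(\hat X_{t-},\hat X_{t-}+j)$.

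Next I would show that for every $j \in J$ and every $T>0$,
\[
\sup_{0 \le r \le T}\left|\int_0^r \bigl[q(X^\epsilon_t, X^\epsilon_t+j\mid Y^\epsilon_t) - \hat q(X^\epsilon_t, X^\epsilon_t+j)\bigr]\,dt\right| \to 0
\]
in probability as $\epsilon \to 0$. Set $Q_j(i,y) = q(i,i+j\mid y)$; the global bound $|Q_j(i,y)| \le q < \infty$ from the standing assumption $\sup_{i,y} q(i\mid y) < \infty$ shows that the hypothesis $\sup_{|i|\le K}|Q_j(i,y)|<\infty$ of the preceding theorem is satisfied. Exploiting $\sum_{y \in \Y} 1_{\{Y^\epsilon_t = y\}} = \sum_{y \in \Y} \pi(y) = 1$, the integrand rewrites as $\sum_{y \in \Y} Q_j(X^\epsilon_t, y)\bigl(1_{\{Y^\epsilon_t = y\}} - \pi(y)\bigr)$, so the claim follows from finitely many applications of the preceding theorem, combined with the triangle inequality over the finite sets $\Y$ and $J$.

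Finally, I would invoke the semimartingale convergence theorem \cite[Theorem 3.21, Chapter IX]{JacodShiryaev}: the starting value $x_0$ is deterministic and hence trivially tight, the compensators $\nu^\epsilon$ converge uniformly on compacts in probability to those of $\hat X$ by the second step, and the martingale problem associated with the intensity matrix $\hat q$ has a unique in-law solution given by the Markov chain $\hat X$, which pins down the limit law.

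The main obstacle I anticipate is not the probabilistic content but the bookkeeping needed to match the precise form of the convergence-of-characteristics hypothesis in Jacod--Shiryaev, typically phrased as convergence of integrals of the compensator against a convergence-determining class of bounded continuous test functions. Because the jump measure is supported on the finite atomic set $J$, this collapses to the finitely many convergence statements already established in the second step, so no genuine difficulty arises beyond the arguments of the preceding theorem.
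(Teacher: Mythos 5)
Your proposal follows essentially the same route as the paper: identify the jump characteristics $q(X^\epsilon_t, X^\epsilon_t+j\mid Y^\epsilon_t)$ and $\hat q(\hat X_t,\hat X_t+j)$, apply the preceding theorem with $Q(i,y)=q(i,i+j\mid y)$ for each $j\in J$ to get uniform convergence of the integrated characteristics in probability, and conclude via the semimartingale convergence theorem of Jacod--Shiryaev. Your additional remarks (rewriting the difference as $\sum_{y\in\Y}Q_j(X^\epsilon_t,y)(1_{\{Y^\epsilon_t=y\}}-\pi(y))$, and noting uniqueness of the limiting martingale problem) are correct details that the paper leaves implicit.
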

\begin{proof}
The infinitesimal jump characteristics are given by
\[ q(X_t^\epsilon, X_t^\epsilon +j | Y_t^\epsilon)\mbox{ for the semimartingale }  (X^\epsilon_{t})_{t\in[0,\infty)},\]
and
\[ \hat{q}(\hat{X}_t,\hat{X}_t + j)\mbox{ for the semimartingale }
(\hat{X}_{t})_{t\in[0,\infty)}.\]
Theorem \ref{satz} shows that for all $T>0,\; j \in J$
\[\sup_{0\leq r\leq T}\int_0^r \left( q(X_t^\epsilon,X_t^\epsilon + j|Y_t^\epsilon) -\hat{q}(X_t^\epsilon,X_t^\epsilon + j)\right)dt\rightarrow 0\mbox{ in probability as }\epsilon\rightarrow 0\]
using $Q(i,y)=q(i,i+j|y)$. Again
this implies the assertion by the semimartingale convergence  theorem; see \cite[Theorem 3.21, Chapter IX]{JacodShiryaev}.
\end{proof}

\section*{Acknowledgements}
We thank the anonymous referee for carefully reading an earlier version of this manuscript and for thoughtful and valuable comments that helped to improve the presentation of this article.

\begin{appendix}
\section{Proof of Lemma \ref{lem:analytic}}\label{appendixa}

\begin{proof}[Proof of Lemma \ref{lem:analytic}]
Note that in the proof we shall provide a more precise inequality which will be used in proving Theorem \ref{satz}. Due to this reason we shall use a further continuous mapping $g:\R\rightarrow\R$ and write $g(h({x}/{T}))$ instead of $h({x}/{T})$. For the assertion of this analytical lemma, $g$ is just the identity. Let $[\alpha,\beta]=h([0,1])$. Let $\delta>0$.\ Since $g$ is uniformly continuous on $[\alpha,\beta]$ there exists $\rho>0$ such that
\[|g(y)-g(y')|\leq \frac {\delta}{C_1} \mbox{ for }|y-y'|\leq \rho,y,y'\in[\alpha,\beta],\]
where $C_1=\sup_T\frac{1}{T}\int_0^T|f(x)|dx$; also let $C_2=\sup_{y\in[\alpha,\beta]}|g(y)|.$ Set
\[s_0=0,\;\;s_i=\inf\{s\geq s_{i-1}:|h(s)-h(s_{i-1})|=\rho\}\wedge 1,\]
furthermore $n=\sup\{i:s_i<1\}$, where, due to the continuity of $h$, $n$ is finite. Now
\begin{align*}
&\left|\frac{1}{t}\int_0^tg\left(h\left(x/t\right)\right)f(x)dx\right|\\
\leq&\left|\frac{1}{t}\sum_{i=1}^{n+1}\int_{ts_{i-1}}^{ts_i}\left(g\left(h\left(x/t\right)\right)-g\left(h\left(s_i\right)\right)\right)f(x)dx\right|+\left|\frac{1}{t}\sum_{i=1}^{n+1}\int_{ts_{i-1}}^{ts_i}g\left(h\left(s_i\right)\right)f(x)dx\right|\\
\leq &\frac{1}{t}\frac{\delta}{C_1}\sum_{i=1}^{n+1}\int_{ts_{i-1}}^{ts_i}|f(x)|dx+\frac{1}{t}\sum_{i=1}^{n+1}\left|g\left(h\left(s_i\right)\right)\right|\left|\int_{ts_{i-1}}^{ts_i}f(x)dx\right|\\
\leq & \frac{\delta}{C_1}\frac{1}{t}\int_0^t|f(x)|dx+\frac{1}{t}\sum_{i=1}^{n+1}C_2\left|\int_{0}^{ts_i}f(x)dx-\int_{0}^{ts_{i-1}}f(x)dx\right|\\
\leq & \delta+2C_2\sum_{i=1}^{n+1}\left|\frac{1}{t}\int_{0}^{ts_i}f(x)dx\right|.
\end{align*}
Now choose $t_0$ such that
\[\sup_{s\geq s_1}\left|\frac{1}{t}\int_0^{ts}f(x)dx\right|\leq \frac{\delta}{2C_2(n+1)}\mbox{ for all }t\geq t_0,\]
thus
\[\left|\frac{1}{t}\int_0^tg\left(h\left(x/t\right)\right)f(x)dx\right|\leq \delta+2C_2(n+1)\frac{\delta}{2C_2(n+1)}=2\delta.\]
\end{proof}
\end{appendix}

\newpage
\addcontentsline{toc}{section}{Literature}
\bibliographystyle{plainnat}
\bibliography{literaturefastswitching}


\end{document}